\documentclass[smallextended,referee,envcountsect]{arxiv-jota} \smartqed\usepackage{graphicx} \usepackage{amsmath} \usepackage{amssymb} \usepackage{mathtools} \usepackage{amsfonts} \usepackage{cite} \journalname{JOTA} \par
\let\arxivOriginalHeadbox\makeheadbox\renewcommand{\makeheadbox}{\phantom{\arxivOriginalHeadbox}} \hbadness=10000\vbadness=10000\hfuzz=3.2pt\begin{document} \par
\title{Sharp Quadratic Majorants of Weighted Power Means of Quadratic Forms} \par
\author{Alexey Peregudin} \par
\institute{Alexey Peregudin \at University of Sheffield \\ Sheffield, United Kingdom\\ a.peregudin@sheffield.ac.uk } \par
\date{Received: date / Accepted: date} \par
\maketitle\par
\begin{abstract} We study when concave weighted power means of positive semidefinite quadratic forms admit sharp quadratic upper bounds. The starting point is the scalar fact that each such mean is the lower envelope of its supporting affine majorants. After the scalar variables are replaced by quadratic forms, the resulting bounds may cease to be extremal among quadratic majorants. We prove that sharpness is preserved under a lower-envelope convexity condition on the joint numerical range. In that case, a nonlinear inequality involving power means of quadratic forms is certified, with no gap, by a single linear matrix inequality. The condition is weaker than convexity of the full augmented range and holds automatically for two quadratic forms. This yields a sharp Peter-Paul-type majorant for the whole concave power-mean scale, including Yuan's lemma and the weighted geometric mean as special cases. We also derive variational formulas for the extrema of sums of power means of quadratic forms on the unit sphere; under the same lower-envelope condition, the maximization becomes an exact convex eigenvalue minimization. A finite-horizon control application illustrates how the certificate can be used constructively. \end{abstract} \keywords{Weighted power means \and S-lemma \and Hidden convexity \and Joint numerical range \and Semidefinite relaxation} \subclass{90C22 \and90C20 \and90C26 \and15A63 \and15A60 \and26E60} \par
\section{Introduction} \label{sec:intro} \par
The family of weighted power means provides a unified way of averaging nonnegative values, interpolating between the minimum and maximum operators. We begin by recalling the standard definition of these means. \par
Consider a vector $y \in\mathbb{R}^m_{\scriptscriptstyle\ge0}$ and weights $w \in\mathbb{R}^m_{\scriptscriptstyle> 0}$ satisfying $\sum_{i=1}^m w_i = 1$. For $p \in\mathbb{R} \setminus\{0\}$, the weighted power mean of $y$ with weights $w$ is defined as \begin{equation} \label{eq:Mean-definition} \underset{p, \;w}{\operatorname{Mean}}\big[y\big] \vcentcolon= \left( \sum_{i=1}^m w_i y_i^p \right)^{1/p} \end{equation} with the convention that $\operatorname{Mean}_{p,w}[y]=0$ if $p < 0$ and some $y_i=0$. By continuous extension, the definition includes the limiting cases $p \in\{0, \pm\infty\}$: \begin{equation*} \underset{-\infty, \;w}{\operatorname{Mean}} \big[y\big] = \min_{1 \le i \le m} y_i, \qquad\underset{0, \;w}{\operatorname{Mean}} \big[y\big] = \prod_{i=1}^m y_i^{w_i}, \qquad\underset{+\infty, \;w}{\operatorname{Mean}} \big[y\big] = \max_{1 \le i \le m} y_i. \end{equation*} This definition generalizes standard weighted means. For instance, setting the power $p$ to $-1, 0, 1, 2$ yields the harmonic, geometric, arithmetic, and quadratic means, respectively. \par
\par
For a fixed $w$ and $p \in[-\infty,1)$, let $q = p(p-1)^{-1}$, with the limiting convention $q=1$ when $p=-\infty$, and define the admissible coefficient set $\mathcal{A}$ as the level set of the conjugate mean: \begin{equation*} \mathcal{A} = \left\{ \alpha\in\mathbb{R}_{\scriptscriptstyle\ge0}^{m} \;\middle|\; \underset{q, \;w}{\operatorname{Mean}}\big[\alpha\big] = 1 \right\}. \end{equation*} Then, for any $\alpha\in\mathcal{A}$, it holds that $\operatorname{Mean}_{p,w} [y] \le\sum_{i=1}^m \alpha_i w_i y_i.$ Moreover, this inequality is \textit{sharp}: \begin{equation} \label{eq:Var-Mean} \underset{p, \;w}{\operatorname{Mean}} \big[y\big] = \inf_{\alpha\in\mathcal{A}} \sum_{i=1}^m \alpha_i w_i y_i. \end{equation} \par
The identity \eqref{eq:Var-Mean} is the standard supporting-hyperplane representation of a concave weighted power mean; see, for example, Bullen's handbook of means~\cite{bullen2003handbook}. Motivated in part by applications in control theory, the present paper asks when this scalar representation remains sharp after replacing the scalar variables by positive semidefinite quadratic forms. Specifically, let $A_i \in\mathbb{S}^n_{\scriptscriptstyle\succeq0}$. Substituting $y_i = x^\top A_i x$ into \eqref{eq:Var-Mean}, we observe that for all $x\in\mathbb{R}^n$, \begin{equation} \label{eq:PowerMean-Quad} \underset{p, \;w}{\operatorname{Mean}} \big[x^\top A_1 x, \dots, x^\top A_m x\big] \le\sum_{i=1}^m \alpha_i w_i (x^\top A_i x) = x^\top\Big( \sum_{i=1}^m \alpha_i w_i A_i \Big) x. \end{equation} We are interested in investigating the sharpness of this inequality. In particular, we pose the following question: \par
\medskip\emph{When is the upper bound in \eqref{eq:PowerMean-Quad} best possible in the class of quadratic forms?} \medskip\par
Here, ``best possible'' means that every quadratic majorant of the left-hand side also dominates the right-hand side for some $\alpha\in\mathcal A$. When such sharpness holds, it enables the reformulation of nonlinear inequalities involving the power means of quadratic forms, such as \begin{equation*} \begin{aligned} & \forall x : \quad\;\min\big\{ x^\top A_1 x, \dots, x^\top A_m x\big\} && \!\!\!\!\!\!\! \le x^\top B x, && \;\;\text{(case $p=-\infty$)}\\[3pt] & \forall x : \quad\left( \frac{w_1}{x^\top A_1 x}+ \dots+\frac{w_m}{x^\top A_m x} \right)^{\!-1} &&\!\!\!\!\!\!\! \le x^\top B x, && \;\;\text{(case $p=-1$)} \\[5pt] & \forall x : \quad\;\big( x^\top A_1 x\big)^{w_1}\cdots\big( x^\top A_m x\big)^{w_m} &&\!\!\!\!\!\!\! \le x^\top B x, && \;\;\text{(case $p=0$)} \end{aligned} \end{equation*} into \textit{equivalent} LMI conditions of the form \begin{equation*} \exists\alpha\in\mathcal{A} : \quad\sum_{i=1}^m \alpha_i w_i A_i \preceq B, \end{equation*} which is particularly relevant in optimization and control-theoretic contexts. \par
Equivalences of this kind are naturally viewed as lossless $\mathcal{S}$-procedure statements: a pointwise inequality over all $x$ is certified by an LMI with auxiliary multipliers. Our main theorem shows that this certificate is lossless whenever the relevant lower envelope of the augmented quadratic range is convex. \par
\smallskip\par
The paper makes four principal \textit{contributions}: \begin{enumerate} \item It establishes a unified quadratic-majorant theorem for the full concave power-mean family $p\in[-\infty,1)$, including finite sums of such means. \item It identifies lower-envelope convexity as a natural sufficient condition for losslessness, weaker than convexity of the full augmented joint range. \item It proves unconditional sharpness in the two-term case for every concave weighted power mean, yielding a Peter-Paul-type majorant whose $p=-\infty$ endpoint recovers Yuan's lemma. \item It derives exact convex reformulations for the associated maximization problems on the unit sphere and applies the geometric-mean case to a finite-horizon optimal control problem. \end{enumerate} \par
\par
\par
The remainder of the paper is organized as follows. Section~2 positions the results within the literature on the $\mathcal{S}$-procedure, hidden convexity, and geometric-mean optimization. Section~3 develops the lower-envelope geometry and gives automatic and checkable convexity conditions. Section~4 proves the sharp majorant theorem and derives its two-term and variational consequences. Section~5 applies the geometric-mean formulation to finite-horizon optimal control, and Section~6 concludes with open questions. \par
\par
\section{Related Work} \label{sec:related} \par
The problem studied in this paper lies at the confluence of three research traditions: lossless $\mathcal{S}$-procedures and Yuan-type alternatives, hidden convexity and rank-one exactness for quadratic maps, and operator and geometric means. \par
\smallskip\noindent\textit{$\mathcal{S}$-procedure and Yuan-type alternatives.} The main result can be read as a lossless nonlinear $\mathcal{S}$-procedure for homogeneous functions of quadratic forms. Classical $\mathcal{S}$-procedure results certify global quadratic implications by LMIs with multipliers; see the survey of P\'olik and Terlaky~\cite{polik2007survey} and the original work of Yakubovich~\cite{yakubovich1971s}. Related nonquadratic extensions include the U-lemma of Yang, Wang, and Xia~\cite{yang2022toward}, which unifies the $\mathcal{S}$-lemma with the convex Farkas lemma by coupling a pair of quadratic functions with additional convex constraints in a separate variable. The closest endpoint is Yuan's lemma~\cite{yuan1990subproblem}: for two quadratic forms, a pointwise max/min alternative is equivalent to the positive semidefiniteness of a suitable convex combination. The eigenvalue form of this alternative was developed by Mart\'inez-Legaz and Seeger~\cite{martinezlegaz1994yuan}. Extensions beyond two forms generally require additional rank, definiteness, or exactness assumptions; see, for example,~\cite{haeser2017extension,song2023calabi,ai2024tightness}. Our theorem replaces the minimum by arbitrary concave weighted power means, and the multiplier set is the dual power-mean level set coming from the scalar variational formula. \par
\smallskip\noindent\textit{Hidden convexity and rank-one exactness.} The losslessness mechanism belongs to the hidden-convexity theory of quadratic maps; see, for example, Ben-Tal and Teboulle~\cite{bental1996hidden}. Dines' theorem~\cite{dines1941mapping} gives convexity of the joint range of two real homogeneous quadratic forms, while Polyak~\cite{polyak1998convexity} and later work study convexity of higher-dimensional quadratic transformations and its consequences for optimization and control. In this paper the relevant condition is weaker than full convexity of the augmented joint range: only the lower envelope $V_{A,B}$, which assigns to prescribed values of the forms $x^\top A_i x$ the least attainable value of $x^\top Bx$, needs to be convex. The proof of the unconditional two-term case uses semidefinite lifting and the Barvinok-Pataki rank bound~\cite{barvinok1995problems,pataki1998rank} to recover a rank-one optimizer; related rank-one decomposition arguments appear in Sturm and Zhang~\cite{sturm2003cones}. Recent work on exact aggregations of quadratic inequalities~\cite{blekherman2024aggregations} is close in spirit, but our aggregating multipliers are dictated by the dual geometry of power means. \par
\smallskip\noindent\textit{Geometric means: majorants versus minorants.} For the geometric mean, the same scaled arithmetic majorants appear in the SDP relaxation of Yuan and Parrilo~\cite{yuan2021semidefinite} for maximizing products of nonnegative quadratic forms on the sphere, for which they establish hardness and approximation guarantees. Relative to their results, we identify a natural structural condition under which this relaxation is exact, and extend the exact certificate to all powers $p<1$, to unequal weights, and to grouped sums. The two-term geometric case is also complementary to the operator-geometric-mean theory of Pusz-Woronowicz~\cite{pusz1975functional} and Kubo-Ando~\cite{kubo1980means}; see also Bhatia~\cite{bhatia2007positive}. For $A,B\succ0$, the operator geometric mean satisfies $x^\top(A\#B)x\le\sqrt{(x^\top A x)(x^\top B x)}$, and $A\#B$ is a maximal quadratic minorant, although in general no largest quadratic minorant exists. Corollary~\ref{cor:PP-Majorant} addresses the opposite side: sharp quadratic majorants. \par
\smallskip\noindent The distinguishing feature of the present framework is the combination of power-mean dual multiplier geometry with lower-envelope exactness; neither the classical alternatives, whose multipliers have different geometric origins, nor operator-mean theory yields the sharp quadratic majorants obtained here. \par
\section{Lower-Envelope Convexity} \label{sec:lower-envelope} \par
This section develops the lower-envelope convexity condition used in the main results of Section~\ref{sec:main}. The lower envelope records the least value of $x^\top Bx$ compatible with prescribed values of the forms $x^\top A_i x$, and its convexity will later provide the key passage from pointwise information about these quadratic forms to the corresponding matrix-level certificate. \par
\par
Given a tuple of matrices $A = (A_1, \dots, A_m)$ with $A_i \in\mathbb{S}^n$, we define the (unconstrained) joint numerical range as \begin{equation*} W(A) \vcentcolon= \left\{ \big(x^\top A_1 x, \dots, x^\top A_m x\big) \in\mathbb{R}^m \mid x \in\mathbb{R}^n \right\}. \end{equation*} For any $B \in\mathbb{S}^n_{\scriptscriptstyle\succeq0}$, the range $W(A,B) \subseteq\mathbb{R}^{m+1}$ is defined analogously. The associated lower envelope $V_{A,B}:\mathbb{R}^m \to\mathbb{R} \cup\{ + \infty\}$ is defined by \begin{equation*} V_{A,B}(a) \vcentcolon= \inf\left\{x^\top B x \;\middle|\; \big(x^\top A_1 x, \dots, x^\top A_m x\big) = a\right\} \end{equation*} with the convention that $\inf\varnothing= +\infty$. \par
Geometrically, $W(A)$ is the effective domain of $V_{A,B}$, while $V_{A,B}(a)$ records the lower edge of the fiber of $W(A,B)$ above $a$. Thus $W(A,B)$ lies in the epigraph of $V_{A,B}$, but it need not fill that epigraph and may be non-convex even when the epigraph is convex. \par
\begin{proposition} \label{prop:hierarchy} Under the definitions above, the following implications hold: \begin{equation*} W(A,B) \text{ is convex } \Rightarrow\; V_{A,B} \text{ is convex } \Rightarrow\; W(A) \text{ is convex}. \end{equation*} The converse implications do not hold in general. \end{proposition}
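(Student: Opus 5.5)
The two implications follow directly from the definitions; most of the work is in the counterexamples.

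\emph{First implication.} Assume $W(A,B)$ is convex. Take $a,a'\in\mathbb{R}^m$ and $t\in[0,1]$. If either point lies outside $W(A)$, the convexity inequality holds trivially because the right-hand side is $+\infty$. Otherwise $V_{A,B}(a)$ and $V_{A,B}(a')$ are finite, and in fact nonnegative, since $B\succeq0$. For $\varepsilon>0$, pick $x,x'$ with $(x^\top A_i x)_i=a$, $x^\top Bx\le V_{A,B}(a)+\varepsilon$, and similarly for $a'$. Then $(a,x^\top Bx)$ and $(a',x'^\top Bx')$ lie in $W(A,B)$, so by convexity their convex combination lies in $W(A,B)$. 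That is, some $z$ realizes the value $ta+(1-t)a'$ with $z^\top Bz=t\,x^\top Bx+(1-t)\,x'^\top Bx'$. Hence $V_{A,B}(ta+(1-t)a')\le tV_{A,B}(a)+(1-t)V_{A,B}(a')+\varepsilon$, and we let $\varepsilon\to0$. This step does not need the infimum to be attained.

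\emph{Second implication.} The effective domain of $V_{A,B}$ is exactly $W(A)$: the fiber above $a$ is nonempty iff $a\in W(A)$, and on that fiber the infimum is finite since $B\succeq 0$. The effective domain of a convex extended-real function is convex, so $W(A)$ is convex.

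\emph{Failure of the first converse.} Take $n=m=2$, $A_1=\mathrm{diag}(1,0)$, $A_2=\mathrm{diag}(0,1)$, and $B=\begin{pmatrix}1&1\\1&1\end{pmatrix}\succeq0$. Then
\[
W(A,B)=\bigl\{(a_1,a_2,\,a_1+a_2\pm2\sqrt{a_1a_2}) : a_1,a_2\ge0\bigr\},
\]
because the sign of $x_1x_2$ is free.
\begin{itemize}
\item $W(A,B)$ is not convex. The midpoint of $(1,0,1)$ and $(0,1,1)$ is $(\tfrac12,\tfrac12,1)$, but the only admissible third coordinates at $(\tfrac12,\tfrac12)$ are $0$ and $2$.
\item $V_{A,B}$ is convex. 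We have $V_{A,B}(a)=a_1+a_2-2\sqrt{a_1a_2}$ on $\mathbb{R}^2_{\ge0}$. This is convex because $\sqrt{a_1a_2}$ is concave there, being the geometric mean.
\end{itemize}

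\emph{Failure of the second converse.} This is the main obstacle. A quick check rules out the easy places to look:
\begin{itemize}
\item $m=1$ is impossible. Then $V_{A,B}$ is positively homogeneous of degree one on a subset of $\mathbb{R}$, vanishes at $0$, and is nonnegative, hence convex.
\item Small $n=2$ examples such as the one above, or the map $(x_1^2-x_2^2,2x_1x_2)$, also give convex envelopes. In those cases the free sign choice always selects the convex branch.
\end{itemize}
So I would look for $n\ge3$ and $m=2$, where $W(A)$ is automatically convex by Dines' theorem, and arrange that the fibers are small enough to make a concave term survive. A first candidate is $A_1=\mathrm{diag}(1,0,-1)$, $A_2=\mathrm{diag}(0,1,-1)$ with a rank-one $B=bb^\top$. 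I would compute $V_{A,B}$ along a segment in $W(A)$ and look for a violation of midpoint convexity. If that fails, I would parametrize the fiber explicitly and tune $b$ until a nonconvex restriction appears, verifying the claimed values numerically before writing the example.
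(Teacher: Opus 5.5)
Your proofs of the two implications are correct; the first spells out the near-minimizer argument that the paper only sketches. Your $m=n=2$ example with $B=(e_1+e_2)(e_1+e_2)^\top$ is a valid and simpler witness that the first implication does not reverse.

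The gap is the second converse. You never exhibit a nonconvex $V_{A,B}$ with $W(A)$ convex, and the search you propose cannot produce one. For $m=2$ the lower envelope is \emph{always} convex, for any symmetric $A_1,A_2$ (indefinite ones included) and any $B\succeq0$; this is Proposition~\ref{prop:convexity-m2}. The reason is as follows.
\begin{itemize}
\item The lifted value function $\overline{V}(a)=\inf\{\operatorname{tr}[BX] : \operatorname{tr}[A_iX]=a_i,\ X\succeq0\}$ is convex in $a$.
\item After replacing $B$ by $B+\varepsilon I$, the infimum is attained. The Barvinok--Pataki bound $r(r+1)/2\le m=2$ then gives a rank-one optimizer $xx^\top$, so $V_{A,B+\varepsilon I}=\overline{V}_{A,B+\varepsilon I}$ is convex.
\item $V_{A,B}$ is the decreasing pointwise limit of these as $\varepsilon\downarrow0$, hence convex.
\end{itemize}
So your candidate $\operatorname{diag}(1,0,-1)$, $\operatorname{diag}(0,1,-1)$, $bb^\top$ gives a convex envelope for every $b$, and no tuning of $b$ will help.

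The case $n=2$ is excluded as well. Either the $A_i$ span at most a two-dimensional subspace of $\mathbb{S}^2$, which reduces to $m\le2$. Or they span $\mathbb{S}^2$, in which case $W(A)$ is an injective linear image of the nonconvex set $\{xx^\top\}$ and so is itself nonconvex. A counterexample therefore needs at least three linearly independent forms and $n\ge3$.

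Your sign heuristic is the right mechanism. It breaks only when three cross terms cannot all be given a favourable sign at once, and the paper's Example~\ref{ex:frustrated} does exactly this.
\begin{itemize}
\item Take $B=I$ and $A_i=u_iu_i^\top$ with $u_1=(1,0,0)^\top$, $u_2=(1,1,0)^\top$, $u_3=(0,1,1)^\top$.
\item With $y=Ux$, where $U$ has rows $u_i^\top$, one gets $a_i=y_i^2$. Hence $W(A)=\mathbb{R}^3_{\ge0}$ is convex.
\item The envelope is $V_{A,I}(a)=\min_{\sigma\in\{\pm1\}^3}(\sigma\circ\sqrt{a})^\top H(\sigma\circ\sqrt{a})$ with $H=(UU^\top)^{-1}$. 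Its off-diagonal entries are $H_{12}=-2$, $H_{13}=1$, $H_{23}=-1$.
\item Making all cross terms nonpositive would require $\sigma_1\sigma_2=\sigma_2\sigma_3=1$ together with $\sigma_1\sigma_3=-1$, which is impossible.
\item Direct enumeration gives $V_{A,I}(0,1,1)=1$, $V_{A,I}(1,1,1)=2$ and $V_{A,I}(2,1,1)=11-6\sqrt{2}$. Hence $2V_{A,I}(1,1,1)>V_{A,I}(0,1,1)+V_{A,I}(2,1,1)$.
\end{itemize}
Without an example of this kind, with $m\ge3$, the claim that the second implication does not reverse remains unproved.
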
 \textit{Proof.} Convexity of $W(A,B)$, applied to arbitrarily near-minimal points in two fibers, gives the convexity inequality for $V_{A,B}$. Moreover, $W(A)$ is the effective domain of $V_{A,B}$, and the effective domain of a convex function is convex. Examples later in this section show that neither implication is reversible. \qed\par
For $m \le2$, Dines' theorem~\cite{dines1941mapping} guarantees that $W(A)$ is always convex. However, for $m=2$, the augmented range $W(A,B) \subseteq\mathbb{R}^3$ may be non-convex unless additional conditions are met (see~\cite{polyak1998convexity}). As the following proposition states, despite this potential non-convexity of the joint range, its lower envelope remains convex. \begin{proposition} \label{prop:convexity-m2} For $m=2$, the lower envelope $V_{A,B}$ is always convex. \end{proposition}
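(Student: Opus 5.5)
The plan is to show that $V_{A,B}$ coincides with its semidefinite relaxation, which is convex for structural reasons. For $a\in\mathbb{R}^2$ define the feasible set and the relaxed envelope
\begin{equation*}
S(a) \vcentcolon= \big\{ X \in \mathbb{S}^n_{\scriptscriptstyle\succeq0} \;\big|\; \operatorname{tr}(A_1X)=a_1,\ \operatorname{tr}(A_2X)=a_2 \big\}, \qquad \widehat V(a) \vcentcolon= \inf_{X\in S(a)} \operatorname{tr}(BX),
\end{equation*}
with $\inf\varnothing=+\infty$.

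\emph{Convexity of $\widehat V$.} Let $X\in S(a)$ and $Y\in S(b)$. Then $\lambda X+(1-\lambda)Y\in S(\lambda a+(1-\lambda)b)$, and the objective is linear in $X$. Taking near-optimal $X$ and $Y$ gives
\begin{equation*}
\widehat V(\lambda a+(1-\lambda)b)\le\lambda\widehat V(a)+(1-\lambda)\widehat V(b).
\end{equation*}
Every rank-one matrix $xx^\top$ with $(x^\top A_1x,x^\top A_2x)=a$ lies in $S(a)$, so $\widehat V\le V_{A,B}$ pointwise. It therefore remains to prove $\widehat V\ge V_{A,B}$.

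\emph{Reverse inequality.} Fix $a$ and any $X\in S(a)$. The set $S(a)$ is closed and convex, and it contains no line because it sits inside the pointed cone $\mathbb{S}^n_{\scriptscriptstyle\succeq0}$. By Klee's representation of line-free closed convex sets, we can write
\begin{equation*}
X=\sum_k \theta_k X_k + D,
\end{equation*}
where the $X_k$ are extreme points of $S(a)$, the weights $\theta_k\ge0$ satisfy $\sum_k\theta_k=1$, and $D$ is a recession direction. Such a $D$ satisfies $D\succeq0$ and $\operatorname{tr}(A_iD)=0$.

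The Barvinok--Pataki bound~\cite{barvinok1995problems,pataki1998rank} says that an extreme point of a spectrahedron cut out by $m$ linear equalities has rank $r$ with $r(r+1)/2\le m$. With $m=2$ this forces $r\le1$, so each extreme point has the form $X_k=x_kx_k^\top$. Membership $X_k\in S(a)$ then gives $(x_k^\top A_1x_k,x_k^\top A_2x_k)=a$, hence $x_k^\top Bx_k\ge V_{A,B}(a)$.

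Since $B\succeq0$ and $D\succeq0$, we have $\operatorname{tr}(BD)\ge0$. Combining these facts,
\begin{equation*}
\operatorname{tr}(BX)\ \ge\ \sum_k\theta_k\, x_k^\top Bx_k\ \ge\ V_{A,B}(a).
\end{equation*}
Taking the infimum over $X\in S(a)$ yields $\widehat V(a)\ge V_{A,B}(a)$, and hence $V_{A,B}=\widehat V$ is convex. The degenerate case $a=0$ is covered, since the extreme point $0$ has rank zero. As a by-product, $S(a)\neq\varnothing$ iff $a\in W(A)$, which recovers Dines' theorem.

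\emph{Main obstacle.} The delicate point is that the infimum defining $\widehat V$ need not be attained, because $S(a)$ may be unbounded. For that reason I would avoid the usual ``take an optimal solution of minimal rank'' form of the rank argument. Instead, the line-free decomposition handles unboundedness directly, and it uses $B\succeq0$ exactly once, to discard the recession component $D$. I would check carefully that the Barvinok--Pataki bound applies to extreme points of $S(a)$ itself rather than only to optimal faces. This is standard: if $X$ has rank $r$ with $r(r+1)/2>2$, then there is a nonzero symmetric $\Delta$ supported on the range of $X$ with $\operatorname{tr}(A_i\Delta)=0$ for $i=1,2$. For small $\varepsilon$, both $X\pm\varepsilon\Delta$ lie in $S(a)$, so $X$ is not extreme.
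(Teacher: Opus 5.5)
Your proof is correct. Its core matches the paper's: lift to the SDP value function, which is convex, and use the Barvinok--Pataki bound with $m=2$ to force rank one. Where you differ is in handling non-attainment.

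The paper applies the rank bound only to optimal solutions. When the infimum is not attained, it replaces $B$ by $B+\varepsilon I$, which makes the lifted objective coercive on the positive semidefinite cone so that an optimizer exists. It then gets convexity of $V_{A,B}$ as the decreasing pointwise limit of the convex functions $V_{A,B+\varepsilon I}$.

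You instead apply the rank bound to every extreme point of $S(a)$, and your self-contained perturbation proof of that bound is correct. You then use Klee's decomposition of the closed line-free set $S(a)$ into the convex hull of its extreme points plus its recession cone $\{D\succeq0 : \operatorname{tr}(A_iD)=0\}$. The recession part is discarded via $\operatorname{tr}(BD)\ge0$.

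Your route gives the pointwise identity $\widehat V=V_{A,B}$ directly, with no limiting argument, and yields Dines' theorem as a by-product. It also shows that $B\succeq0$ is needed essentially only for nonnegativity of $\operatorname{tr}(BD)$ on that recession cone. The paper's route avoids Klee's theorem and uses only the familiar optimal-solution form of Barvinok--Pataki plus an elementary exchange of infima.
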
 \textit{Proof.} Consider the semidefinite relaxation of the lower envelope obtained by lifting $x \in\mathbb{R}^n$ to $X \in\mathbb{S}^n$, defined by \begin{equation*} \overline{V}_{A,B}(a) \vcentcolon= \inf\left\{ \operatorname{tr}[BX] \;\middle|\; \big(\!\operatorname{tr}[A_1 X], \,\operatorname{tr}[A_2 X]\big) = a, \; X \succeq0 \right\}. \end{equation*} Since $\overline{V}_{A,B}$ is the value function of a semidefinite programming problem, it is convex. The Barvinok-Pataki bound~\cite{barvinok1995problems, pataki1998rank} guarantees that if an optimal solution exists, there is one with rank $r$ satisfying $r(r+1)/2 \le m$. For $m = 2$, this implies $r \le1$. Thus, whenever the infimum is attained, the relaxation admits a rank-one optimal solution $X = x x^\top$, implying $V_{A,B}(a) = \overline{V}_{A,B}(a)$. If the infimum is not attained, replace $B$ by $B+\varepsilon I$. For every $\varepsilon>0$, the lifted objective is coercive on the positive semidefinite cone, so the infimum is attained and the relaxation is exact. As $\varepsilon\downarrow0$, $V_{A,B+\varepsilon I}$ decreases pointwise to $V_{A,B}$ (exchange the two infima). Each $V_{A,B+\varepsilon I} = \overline{V}_{A,B+\varepsilon I}$ is convex, and pointwise limits of convex functions are convex. Therefore, $V_{A,B}$ is convex. \qed\par
\par
In the main results, convexity of the lower envelope is the key structural assumption used to establish losslessness. It is automatic for $m=2$ by Proposition~\ref{prop:convexity-m2}; for larger families, stronger but easier-to-check conditions may be useful. By Proposition~\ref{prop:hierarchy}, convexity of $W(A,B)$ is sufficient, and pairwise commutativity gives a simple algebraic condition. \begin{proposition} \label{prop:commuting} If the matrices in the tuple $(A,B)$ pairwise commute, then the joint numerical range $W(A,B)$ is convex, and therefore $V_{A,B}$ is convex. \end{proposition}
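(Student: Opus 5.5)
Since the matrices $A_1,\dots,A_m,B$ are symmetric and pairwise commute, the plan is to diagonalize them simultaneously and reduce $W(A,B)$ to the image of the nonnegative orthant under a linear map. Convexity is then immediate, and Proposition~\ref{prop:hierarchy} gives convexity of $V_{A,B}$.

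\emph{Step 1 (simultaneous diagonalization).} A commuting family of real symmetric matrices admits a common orthonormal eigenbasis. This follows by induction on $n$: the eigenspaces of one matrix are invariant under all the others, and we restrict to each eigenspace in turn. Hence there is an orthogonal $Q$ with $Q^\top A_i Q = \operatorname{diag}(\lambda^{(i)}_1,\dots,\lambda^{(i)}_n)$ for each $i$, and $Q^\top B Q = \operatorname{diag}(\mu_1,\dots,\mu_n)$.

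\emph{Step 2 (reduction to a cone).} Substitute $x = Qz$. Since $Q$ is a bijection of $\mathbb{R}^n$, this gives
\begin{equation*}
W(A,B) = \Big\{ \textstyle\sum_{k=1}^n z_k^2\, v_k \;\Big|\; z \in \mathbb{R}^n \Big\}, \qquad v_k \vcentcolon= \big(\lambda^{(1)}_k,\dots,\lambda^{(m)}_k,\mu_k\big) \in \mathbb{R}^{m+1}.
\end{equation*}
As $z$ ranges over $\mathbb{R}^n$, the vector $(z_1^2,\dots,z_n^2)$ ranges over all of $\mathbb{R}^n_{\scriptscriptstyle\ge0}$, because every nonnegative number is a square. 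So $W(A,B) = \{\sum_k t_k v_k \mid t \ge 0\}$ is the conic hull of $v_1,\dots,v_n$. This set is the linear image of a convex set, hence convex.

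\emph{Step 3.} Apply the first implication of Proposition~\ref{prop:hierarchy} to conclude that $V_{A,B}$ is convex.

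The only nontrivial ingredient is the simultaneous diagonalization in Step~1. It is classical, so I would cite it rather than reprove it.
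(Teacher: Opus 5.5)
Your proof is correct and follows the same route as the paper: a common orthonormal eigenbasis turns $W(A,B)$ into the conic hull of the vectors of corresponding diagonal entries, and the first implication of Proposition~\ref{prop:hierarchy} then gives convexity of $V_{A,B}$. You simply spell out the substitution $x=Qz$ and the fact that $(z_1^2,\dots,z_n^2)$ sweeps out $\mathbb{R}^n_{\scriptscriptstyle\ge0}$, which the paper leaves implicit.
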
 \textit{Proof.} Pairwise commutativity gives a common orthonormal eigenbasis. In that basis, $W(A,B)$ is the conic hull of the vectors of corresponding diagonal entries, and is therefore convex. \qed\par
The following two examples demonstrate that the convexity of $V_{A,B}$ is strictly weaker than the convexity of $W(A,B)$, and hence also weaker than commutativity. They also illustrate two simple mechanisms by which lower-envelope convexity may arise. \par
\begin{example}[An effectively two-form example] \label{ex:two-form} Consider the case with $m=3$ and $n=2$. Let $B=I\in\mathbb{S}^2$ and let \begin{equation*} A_1 = \begin{bmatrix} 1 & 0 \\ 0 & 0 \end{bmatrix}, \quad A_2 = \begin{bmatrix} 1 & 1 \\ 1 & 1 \end{bmatrix}, \quad A_3 = \begin{bmatrix} 2 & 1 \\ 1 & 1 \end{bmatrix}. \end{equation*} These matrices are positive semidefinite and do not pairwise commute. However, $A_3=A_1+A_2$. Hence this is, in effect, a two-form example: \begin{equation*} V_{A,I}(a_1,a_2,a_3) = \begin{cases} V_{(A_1,A_2),I} \; (a_1,a_2), & a_3=a_1+a_2,\\ +\infty, & \text{otherwise}. \end{cases} \end{equation*} Therefore $V_{A,I}$ is convex by Proposition~\ref{prop:convexity-m2}. On the other hand, $W(A,I)$ is not convex. Indeed, the points $(1,1,2,1)$ and $(0,1,1,1)$ both belong to $W(A,I)$, but their midpoint $\left(\frac12,1,\frac32,1\right)$ does not. The same mechanism appears in Example~\ref{ex:with-graphs} in Section~\ref{sec:main}. \end{example} \par
Unlike the preceding construction, the three matrices in the following example are linearly independent, so lower-envelope convexity cannot be reduced to the two-form case. \par
\begin{example}[A rank-one sign-pattern example] \label{ex:sign-pattern} Consider the case with $m=n=3$. Let $B=I\in\mathbb{S}^3$, $A_i=u_i u_i^\top$, where $u_1=(1,0,0)^\top$, $u_2=(1,1,0)^\top$, $u_3=(1,1,1)^\top$. Equivalently, \begin{equation*} A_1= \begin{bmatrix} 1&0&0\\ 0&0&0\\ 0&0&0 \end{bmatrix},\quad A_2= \begin{bmatrix} 1&1&0\\ 1&1&0\\ 0&0&0 \end{bmatrix},\quad A_3= \begin{bmatrix} 1&1&1\\ 1&1&1\\ 1&1&1 \end{bmatrix}. \end{equation*} These matrices are positive semidefinite and do not commute. Let $U$ be the matrix with rows $u_i^\top$. Since $U$ is nonsingular, writing $y=Ux$ gives $a_i=(u_i^\top x)^2=y_i^2$, $\|x\|^2=y^\top H y$, where \begin{equation*} H=(UU^\top)^{-1} = \begin{bmatrix} 2&-1&0\\ -1&2&-1\\ 0&-1&1 \end{bmatrix}. \end{equation*} Therefore, for $a\in\mathbb{R}^3_{\scriptscriptstyle\ge0}$, $V_{A,I}(a)=\min_{\sigma\in\{\pm1\}^3}(\sigma\circ\sqrt{a})^\top H(\sigma\circ\sqrt{a})$. Since $H$ has nonpositive off-diagonal entries, the minimum is attained when the signs are equal. Thus \begin{equation*} V_{A,I}(a) = 2a_1+2a_2+a_3 -2\sqrt{a_1a_2} -2\sqrt{a_2a_3}, \qquad a\in\mathbb{R}^3_{\scriptscriptstyle\ge0}, \end{equation*} and $V_{A,I}(a)=+\infty$ outside $\mathbb{R}^3_{\ge0}$. This function is convex, since $-\sqrt{st}$ is convex on $\mathbb{R}^2_{\scriptscriptstyle\ge0}$. Nevertheless, $W(A,I)$ is not convex. For instance, the points $(1,1,1,1)$ and $(0,0,1,1)$ both belong to $W(A,I)$, but their midpoint $\left(\frac12,\frac12,1,1\right)$ does not. Indeed, if $y_i^2=(1/2,1/2,1)$, then the possible values of $y^\top H y$ are $3\pm1\pm\sqrt{2}$, none of which equals $1$. Thus lower-envelope convexity may arise from the sign structure of the quadratic representation even when neither commutativity nor reduction to two effective forms is available. \end{example} \par
The preceding example relies on a single sign pattern that minimizes all cross terms simultaneously. The next example shows that when no such sign pattern exists, lower-envelope convexity can fail even when the forms are rank one and $W(A)$ is convex. \par
\par
\begin{example}[A frustrated sign-pattern example] \label{ex:frustrated} In Example \ref{ex:sign-pattern}, keep $B=I$, $u_1$, $u_2$, and $A_i=u_iu_i^\top$, but replace $u_3$ by $(0,1,1)^\top$. Then $W(A)=\mathbb{R}^3_{\scriptscriptstyle\ge0}$ is convex and $V_{A,I}(a)=\min_{\sigma\in\{\pm1\}^3}(\sigma\circ\sqrt{a})^\top H(\sigma\circ\sqrt{a})$ on $\mathbb{R}^3_{\scriptscriptstyle\ge0}$, now with \begin{equation*} H=(UU^\top)^{-1} = \begin{bmatrix} 3&-2&1\\ -2&2&-1\\ 1&-1&1 \end{bmatrix}. \end{equation*} This time the off-diagonal signs of $H$ are frustrated: minimizing all cross terms would require $\sigma_1\sigma_2=\sigma_2\sigma_3=1$ together with $\sigma_1\sigma_3=-1$, which is impossible. No sign choice is optimal throughout, and convexity is lost: direct enumeration gives $V_{A,I}(0,1,1)=1$, $V_{A,I}(1,1,1)=2$, and $V_{A,I}(2,1,1)=11-6\sqrt{2}$, so that \begin{equation*} 2\,V_{A,I}(1,1,1) \;>\; V_{A,I}(0,1,1)+V_{A,I}(2,1,1). \end{equation*} Thus $V_{A,I}$ is non-convex although $W(A)$ is convex, so the second implication in Proposition~\ref{prop:hierarchy} cannot be reversed. In particular, lower-envelope convexity may fail already for three rank-one positive semidefinite forms. \end{example} \par
The following property provides a simple way to generate larger examples from smaller ones. Together with the constructions of Examples \ref{ex:two-form} and \ref{ex:sign-pattern}, it gives useful sufficient mechanisms for lower-envelope convexity. \par
\begin{proposition} \label{prop:direct-sums} Let $A_i=A_i^{(1)}\oplus\cdots\oplus A_i^{(N)}$ and $B=B^{(1)}\oplus\cdots\oplus B^{(N)}$. If every $V_{A^{(k)},B^{(k)}}$ is convex, then the full lower envelope $V_{A,B}$ is convex. \end{proposition}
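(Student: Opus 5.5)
The plan is to express the full lower envelope as an infimal convolution of the block envelopes, and then use the standard fact that an infimal convolution of convex functions is convex. Write $x=(x^{(1)},\dots,x^{(N)})$ according to the block structure. Then $x^\top A_i x=\sum_k x^{(k)\top}A_i^{(k)}x^{(k)}$ and $x^\top Bx=\sum_k x^{(k)\top}B^{(k)}x^{(k)}$, because the off-diagonal blocks vanish. Each block variable $x^{(k)}$ ranges independently over its own space.

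First I would prove the identity
\begin{equation*}
V_{A,B}(a)=\inf\Big\{\sum_{k=1}^N V_{A^{(k)},B^{(k)}}(a^{(k)}) \;\Big|\; a^{(1)}+\cdots+a^{(N)}=a\Big\},
\end{equation*}
which says that $V_{A,B}=V_{A^{(1)},B^{(1)}}\,\square\,\cdots\,\square\,V_{A^{(N)},B^{(N)}}$ is an infimal convolution. For "$\ge$", take any $x$ feasible for $a$ and set $a^{(k)}$ to be the vector of block values $x^{(k)\top}A_i^{(k)}x^{(k)}$. Each block term of $x^\top Bx$ is then at least $V_{A^{(k)},B^{(k)}}(a^{(k)})$. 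For "$\le$", fix a decomposition $a=\sum_k a^{(k)}$ with all block envelopes finite, and choose $\varepsilon$-near-minimizers $x^{(k)}$ in each block. Concatenating them gives a feasible $x$ for $a$ with objective within $N\varepsilon$ of the sum. The conventions $\inf\varnothing=+\infty$ are consistent on both sides, since the sum is $+\infty$ exactly when some block is infeasible.

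Second, I would show directly that an infimal convolution of convex functions with values in $\mathbb{R}\cup\{+\infty\}$ is convex. Take $a,b$ with finite values and $\lambda\in[0,1]$. Choose decompositions $a=\sum_k a^{(k)}$ and $b=\sum_k b^{(k)}$ that are $\varepsilon$-optimal. Then $\lambda a+(1-\lambda)b=\sum_k(\lambda a^{(k)}+(1-\lambda)b^{(k)})$ is an admissible decomposition. Convexity of each block envelope bounds the resulting sum by $\lambda\sum_k V_k(a^{(k)})+(1-\lambda)\sum_k V_k(b^{(k)})$, and letting $\varepsilon\to0$ gives the convexity inequality. If $V_{A,B}(a)$ or $V_{A,B}(b)$ is $+\infty$, there is nothing to check. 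The value $-\infty$ cannot occur, because $B\succeq0$ makes every envelope nonnegative.

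The only delicate step is the bookkeeping with infima and infinite values in the first step: non-attainment and $+\infty$ entries must be handled by near-minimizers rather than by assuming minimizers exist. Once the infimal-convolution identity is in place, convexity follows immediately.
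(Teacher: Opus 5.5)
Your proposal is correct and matches the paper's proof: the paper also writes $V_{A,B}$ as the infimal convolution of the block envelopes $V_{A^{(k)},B^{(k)}}$, using the block decomposition of $x$, and then uses that infimal convolution preserves convexity. The only difference is that you spell out the $\varepsilon$-near-minimizer bookkeeping and the $+\infty$ conventions, which the paper leaves implicit.
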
 \textit{Proof.} Decomposing $x$ into its orthogonal blocks shows that $V_{A,B}$ is the infimal convolution of the block lower envelopes. Since infimal convolution preserves convexity, the claim follows.\qed\par
The results above provide several readily verifiable mechanisms for lower-envelope convexity, but they are not intended as a characterization. In the main results below, lower-envelope convexity is imposed directly. \par
\section{Main Results: Sharp Quadratic Majorants} \label{sec:main} \par
Throughout this section, we use the notation and conventions for power means introduced in Section~\ref{sec:intro}. \par
\subsection{Grouped Power-Mean Majorants} \par
The theorem below treats disjoint groups of quadratic forms, each with its own power and weights. Sufficiency follows from the scalar variational formula, while necessity uses a minimax argument and lower-envelope convexity. \par
\par
\begin{theorem}[Sharpness of Power-Mean Majorants] \label{thm:main} Let $\mathcal{I}$ be a finite index set partitioned into disjoint groups $\mathcal{I}_1, \dots, \mathcal{I}_K$. For each $k \in\{1, \dots, K\}$, let $p_k \in[-\infty, 1)$ and let weights $w^k \vcentcolon= (w_i)_{i \in\mathcal{I}_k}$ be given such that $w_i > 0$ and $\sum_{i \in\mathcal{I}_k} w_i = 1$. Let $A = (A_i)_{i \in\mathcal{I}}$ with $ A_i \in\mathbb{S}^{n}_{\scriptscriptstyle\succeq0}$, $A_i \ne0$ and let $B \in\mathbb{S}^{n}_{\scriptscriptstyle\succeq0}$. Denote $q_k = p_k(p_k-1)^{-1}$ and define $\mathcal{A} \vcentcolon= \prod_{k=1}^K \mathcal{A}_k$, where: \begin{equation*} \begin{cases} \mathcal{A}_k \vcentcolon= \Bigl\{ \alpha\in\mathbb{R}_{\scriptscriptstyle\ge0}^{|\mathcal{I}_k|} \Bigm| \underset{\;q_k, \, w^k}{\operatorname{Mean}} \,[ \alpha]=1 \Bigr\}, & \text{if } \, p_k < 0, \\[0.1cm] \mathcal{A}_k \vcentcolon= \Bigl\{ \alpha\in\mathbb{R}_{\scriptscriptstyle> 0}^{|\mathcal{I}_k|} \Bigm| \underset{\;q_k, \, w^k}{\operatorname{Mean}} \,[ \alpha]=1 \Bigr\}, & \text{if } \, 0 \le p_k < 1. \end{cases} \end{equation*} Consider the following statements: \begin{enumerate} \item[$(i)$] For all $x \in\mathbb{R}^n$, \begin{equation*} \sum_{k=1}^K \, \underset{\,p_k, \, w^k}{\operatorname{Mean}} \Big[ \big( x^\top A_i x \big)_{i \in\mathcal{I}_k}\Big] \le x^\top B x. \end{equation*} \par
\item[$(ii)$] There exists $\alpha\in\mathcal{A}$ such that \begin{equation*} \sum_{i \in\mathcal{I}} \alpha_i w_i A_i \preceq B. \end{equation*} \end{enumerate} Then: \begin{itemize} \item[$\bullet$] The implication (ii) $\Rightarrow$ (i) always holds. \item[$\bullet$] If the lower envelope $V_{A,B}$ is convex, the equivalence (i) $\Leftrightarrow$ (ii) holds. \end{itemize} \end{theorem}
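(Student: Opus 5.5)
The direction (ii) $\Rightarrow$ (i) follows from the scalar variational formula \eqref{eq:Var-Mean} applied group by group. Take $\alpha\in\mathcal A$ and write $\alpha=(\alpha^k)_k$ with $\alpha^k\in\mathcal A_k$. For each $k$ and each $x$,
\[
\operatorname{Mean}_{p_k,w^k}\big[(x^\top A_ix)_{i\in\mathcal I_k}\big]\le\sum_{i\in\mathcal I_k}\alpha_iw_i\,x^\top A_ix .
\]
This holds because the affine majorant inequality is valid for every point of the dual level set. For $0\le p_k<1$ we use strictly positive $\alpha$, and that is harmless. Summing over $k$ and using $\sum_i\alpha_iw_iA_i\preceq B$ gives (i).

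For (i) $\Rightarrow$ (ii) under convexity of $V:=V_{A,B}$, I would argue by a minimax/separation argument on the joint range. Define $F(a):=\sum_k \operatorname{Mean}_{p_k,w^k}[(a_i)_{i\in\mathcal I_k}]$, which is concave, positively homogeneous and continuous on $\mathbb R^{|\mathcal I|}_{\ge0}$. Condition (i) says $F(a)\le V(a)$ for every $a\in W(A)$: given $a=(x^\top A_ix)_i$, any $x$ realizing $a$ gives $F(a)\le x^\top Bx$, and we take the infimum over such $x$. So $V-F$ is a convex function that is nonnegative on its domain $W(A)$. This domain is a convex cone by Proposition~\ref{prop:hierarchy}, and both $V$ and $F$ are positively homogeneous of degree one.

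I would then write $F(a)=\inf_{\alpha\in\mathcal A}\langle \alpha\circ w,a\rangle$ via \eqref{eq:Var-Mean}, so that
\[
0\le\inf_{a\in W(A)}\big(V(a)-F(a)\big)=\inf_{a\in W(A)}\sup_{\alpha\in\mathcal A}\big(V(a)-\langle\alpha\circ w,a\rangle\big).
\]
To swap inf and sup I would restrict $a$ to the compact convex slice $W(A)\cap\{\sum_i a_i=1\}$, which suffices by homogeneity, and replace $\mathcal A$ by its convex hull. Since $F$ is concave, its supporting set is the superlevel set $\{\operatorname{Mean}_{q_k,w^k}[\alpha^k]\ge1\}$, which is convex; for $q_k<1$ that mean is concave, and for $p_k=-\infty$ it is arithmetic. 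Enlarging $\mathcal A$ to this set changes nothing because it only increases the right-hand side of (ii), so a multiplier in the enlarged set can be scaled down to one in $\mathcal A$ while preserving $\preceq B$. The integrand is convex in $a$ and affine in $\alpha$, so Sion's theorem gives some $\alpha^\star$ with $V(a)\ge\langle\alpha^\star\circ w,a\rangle$ for all $a\in W(A)$. Unwinding the definition of $V$ yields $x^\top Bx\ge x^\top(\sum_i\alpha^\star_iw_iA_i)x$ for every $x$, which is (ii).

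The main obstacle is attainment of the supremum in the minimax step. The set $\mathcal A$ is unbounded, and for $0\le p_k<1$ it requires strict positivity, so Sion only gives $\inf\sup=\sup\inf$ and not a maximizer. My plan is to bound the maximizing sequence using $A_i\neq0$. Pick $x$ with $x^\top A_ix>0$ for the relevant $i$; the inequality $\sum_j\alpha_jw_jx^\top A_jx\le x^\top Bx$ then bounds $\alpha_i$. This lets me pass to a limit point $\alpha^\star$, which satisfies the closed constraint $\operatorname{Mean}_{q_k}[\alpha^k]\ge1$.

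For $q_k\in(0,1]$, that is $p_k\le0$, zero entries are allowed and the limit stays in the set. For $q_k<0$, that is $0<p_k<1$, the constraint $\operatorname{Mean}_{q_k}\ge1$ with $q_k<0$ forces every entry to be positive, since the mean vanishes when any entry is zero. For $p_k=0$ the dual is the geometric mean, which also forces positivity. In each case the limit is positive where required. An $\varepsilon$-slack in the separation, with $\varepsilon\to0$, handles lower-semicontinuity issues of $V$ at the boundary.
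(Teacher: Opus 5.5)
Your argument is correct, but it takes a genuinely different route from the paper.

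The paper lifts $x$ to density matrices $X\in\mathcal D$. It first proves, with no hypothesis on $V_{A,B}$, that the lifted inequality $(i^*)$ is equivalent to (ii). This uses Sion on $\mathcal B\times\mathcal D$ after the change of variables $\alpha=\varphi(\beta)$, which makes the trace pairing convex in the multiplier while keeping it linear in $X$. Attainment then follows from coercivity of $\alpha\mapsto\lambda_{\max}(\sum_i\alpha_iw_iA_i-B)$. Convexity of $V_{A,B}$ is used only once, through Jensen's inequality over the spectral decomposition of $X$, to get (i) $\Rightarrow(i^*)$.

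You never lift. You apply Sion directly in the image space to $V(a)-\langle\alpha\circ w,a\rangle$ on the compact slice of the convex cone $W(A)$. Convexity of $V$ therefore enters as the convexity hypothesis of the minimax theorem. You convexify the multiplier side by enlarging $\mathcal A$ to the superlevel set $\{\operatorname{Mean}_{q_k,w^k}\ge 1\}$ and rescaling at the end, rather than by reparametrizing. The outcome is a sandwich statement: a linear functional lying between the concave $F$ and the convex $V$ on $W(A)$. This is a clean geometric reading of the certificate and stays in dimension $|\mathcal I|$.

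What each route buys:
\begin{itemize}
\item The paper's route gives an unconditional SDP-level equivalence, so the only possible loss is the rank-one gap. That is exactly what Corollary~\ref{cor:variational} and the eigenvalue programs exploit. Its minimax step also needs no regularity beyond linearity in $X$.
\item Your route pays for its directness with one extra check. Sion requires $V$ to be lower semicontinuous on the slice. The $\varepsilon$-slack you invoke handles non-attainment of the supremum over $\alpha$, not this.
\end{itemize}

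Lower semicontinuity does hold here. All $A_i$ vanish on $\ker\sum_iA_i$, and minimizing $x^\top Bx$ over that kernel leaves a positive semidefinite form in the complementary component. On that component the fibres $\{x_1 : (x_1^\top A_ix_1)_i=a\}$ are compact, so $V$ is attained and lower semicontinuous. Alternatively, replace $V$ by its lower semicontinuous hull, which is still convex, still dominates the continuous $F$, and still lies below $x^\top Bx$ along $a(x)$.

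A small slip: $q_k\in(0,1]$ corresponds to $p_k<0$, not $p_k\le 0$. You treat $p_k=0$ separately anyway.
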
 \par
\textit{Proof.} Recall the variational characterization \eqref{eq:Var-Mean} of a single weighted power mean. Applying it group-wise, we obtain the following scalar equality, which will be used in the sequel: \begin{equation} \label{eq:variational-sum-wgm} \sum_{k=1}^K \underset{\, p_k, \,w^k}{\operatorname{Mean}} \big[(y_i)_{i \in\mathcal{I}_k}\big] = \inf_{\alpha\in\mathcal{A}} \sum_{i \in\mathcal{I}} \alpha_i w_i y_i. \end{equation} \par
Let $\mathcal{D} \vcentcolon= \left\{ X \in\mathbb{S}^n_{\scriptscriptstyle\succeq0} \mid\operatorname{tr}\big[X\big] = 1 \right\}$ be the set of density matrices. To make the proof structure transparent, we introduce an intermediate statement: \begin{enumerate} \item[$(i^*)$] For all $X \in\mathcal{D}$, \begin{equation*} \sum_{k=1}^K \, \underset{\,p_k, \, w^k}{\operatorname{Mean}} \Big[ \big( \operatorname{tr} \!\big[ A_i X \big] \big)_{i \in\mathcal{I}_k}\Big] \le\operatorname{tr} \big[ B X\big]. \end{equation*} \end{enumerate} We now prove the theorem by showing three facts: \begin{enumerate} \item Unconditionally, $(ii) \Rightarrow(i)$. \item Unconditionally, $(i^*) \Leftrightarrow(ii)$. \item If $V_{A,B}$ is convex, then $(i) \Rightarrow(i^*)$. \end{enumerate} \par
\textit{Proof of $(ii) \Rightarrow(i)$.} Given $(ii)$, for any $x \in\mathbb{R}^n$ we have \begin{equation*} \begin{aligned} \sum_{k=1}^K \underset{\, p_k, \,w^k}{\operatorname{Mean}} \Big[ \big( x^\top A_i x \big)_{i \in\mathcal{I}_k}\Big] & = \inf_{\alpha\in\mathcal{A}} \sum_{i \in\mathcal{I}} \alpha_i w_i \big( x^\top A_i x \big) && \text{(by \eqref{eq:variational-sum-wgm} with $y_i=x^\top A_i x$)} \\ & \le x^\top B x, && \text{(by $(ii)$)} \end{aligned} \end{equation*} therefore $(i)$ holds. \par
\textit{Proof of $(i^*) \Leftrightarrow(ii)$.} We introduce a change of variables to convexify the optimization domain. Let $\mathcal{B} \vcentcolon= \prod_{k=1}^K \mathcal{B}_k$, where: \begin{equation} \label{eq:B-change} \begin{cases} \mathcal{B}_k = \big\{ \beta\in\mathbb{R}^{|\mathcal{I}_k|} \mid\sum_{i \in\mathcal{I}_k} w_i \beta_i = 0 \big\}, & \text{if } p_k = 0, \\ \mathcal{B}_k = \big\{ \beta\in\mathbb{R}_{\scriptscriptstyle\ge0}^{|\mathcal{I}_k|} \mid\sum_{i \in\mathcal{I}_k} \beta_i = 1 \big\}, & \text{if } \, p_k < 0, \\ \mathcal{B}_k = \big\{ \beta\in\mathbb{R}_{\scriptscriptstyle> 0}^{|\mathcal{I}_k|} \mid\sum_{i \in\mathcal{I}_k} \beta_i = 1 \big\}, & \text{if } \, 0 < p_k < 1. \\ \end{cases} \end{equation} Consider the change of variables $\alpha= \varphi(\beta)$ defined component-wise by \begin{equation} \label{eq:beta-change} \varphi(\beta_i) \vcentcolon= \begin{cases} e^{\beta_i}, & \text{if } i \in\mathcal{I}_k \text{ with }p_k = 0, \\ (w_i^{-1} \beta_i)^{1/q_k}, & \text{if } i \in\mathcal{I}_k \text{ with } p_k \ne0. \end{cases} \end{equation} Define the functions $\Phi: \mathcal{A} \times\mathcal{D} \to\mathbb{R}$ and $\Phi_e: \mathcal{B} \times\mathcal{D} \to\mathbb{R}$ as \begin{equation*} \Phi(\alpha, X) := \operatorname{tr}\left[ \left( \sum_{i \in\mathcal{I}} \alpha_i w_i A_i - B \right) X \right]\!, \quad\Phi_{e}(\beta, X) := \Phi(\varphi(\beta), X). \end{equation*} Observe the following properties: \begin{enumerate} \item For fixed $\beta$, the map $X \mapsto\Phi_e(\beta, X)$ is linear (and thus concave and continuous) on the compact convex set $\mathcal{D}$. \item For fixed $X$, the map $\beta\mapsto\Phi_e(\beta,X)$ is convex on the convex set $\mathcal B$. \end{enumerate} Therefore, Sion's Minimax Theorem applies~\cite{sion1958general}: \begin{equation*} \sup_{X \in\mathcal{D}} \inf_{\beta\in\mathcal{B}} \Phi_e(\beta, X) = \inf_{\beta\in\mathcal{B}} \sup_{X \in\mathcal{D}} \Phi_e(\beta, X). \end{equation*} The map $\varphi$ defines a bijection from $\mathcal{B}$ onto $\mathcal{A}$. Consequently, the optimization over $\mathcal{B}$ is equivalent to optimizing over $\mathcal{A}$. Translating the result back to the original variables yields: \begin{equation} \label{eq:sion-minimax} \sup_{X \in\mathcal{D}} \inf_{\alpha\in\mathcal{A}} \Phi(\alpha, X) = \inf_{\alpha\in\mathcal{A}} \sup_{X \in\mathcal{D}} \Phi(\alpha, X). \end{equation} Finally, recall that for all $P \in\mathbb{S}^n$, it holds that \begin{equation} \label{eq:density-to-unit-vector} \sup_{X \in\mathcal{D}} \operatorname{tr}[PX] = \sup_{\|x\|=1} \operatorname{tr}[Px x^\top] = \lambda_{\max} (P). \end{equation} Putting the above observations together, we obtain: \begin{equation*} \begin{aligned} (i^*) \quad& \Leftrightarrow&& \sup_{X \in\mathcal{D}} \, \inf_{\alpha\in\mathcal{A}} \, \Phi(\alpha,X) \le0 && \text{(by \eqref{eq:variational-sum-wgm} with $y_i = \operatorname{tr} \big[ A_i X\big]$)} \\ & \Leftrightarrow&& \inf_{\alpha\in\mathcal{A}} \, \sup_{X \in\mathcal{D}} \, \Phi(\alpha,X) \le0 && \text{(by \eqref{eq:sion-minimax})} \\ & \Leftrightarrow&& \inf_{\alpha\in\mathcal{A}} \, \sup_{\|x\|=1} \Phi(\alpha,x x^\top) \le0 && \text{(by \eqref{eq:density-to-unit-vector})} \\ & \Leftrightarrow&& \inf_{\alpha\in\mathcal{A}} \lambda_{\max}\Big( \sum_{i \in\mathcal{I}} \alpha_i w_i A_i - B \Big) \le0, \end{aligned} \end{equation*} which is equivalent to $(ii)$, provided the infimum is attained. \par
Finally, we confirm the attainment of the infimum. The function $\alpha\mapsto\lambda_{\max} \left( \sum_{i \in\mathcal{I}} \alpha_i w_i A_i - B \right)$ is continuous and the domain is a product of group-wise constraint sets. For groups where $p_k < 0$, the corresponding set is compact. For groups where $0 < p_k < 1$, the constraint with $q_k<0$ implies that every $\alpha_i$ is bounded away from zero. For groups where $p_k=0$, if any component $\alpha_i \to0$, then the constraint forces some other component $\alpha_{i'} \to\infty$. Finally, since $A_i \succeq0$ and $A_i \neq0$, the objective function is coercive, so no component can tend to infinity along a minimizing sequence. Hence every such sequence has a convergent subsequence with limit in the domain. Therefore, the infimum is attained at some $\alpha\in\mathcal{A}$, completing the proof of $(i^*) \Leftrightarrow(ii)$. \par
\textit{Proof of $(i) \Rightarrow(i^*)$.} For any $X \in\mathcal{D}$ a decomposition exists in the form \begin{equation} \label{eq:density-spectral-decomp} X = \sum_{j=1}^n \lambda_j v_j v_j^\top, \quad\lambda_j \ge0, \quad\sum_{j=1}^n \lambda_j = 1, \quad\|v_j\| =1. \end{equation} Define $a: \mathcal{D} \to\mathbb{R}^{|\mathcal{I}|}$ by $a_i(X) \vcentcolon= \operatorname{tr} [A_i X]$, $a(X) \vcentcolon= \big( a_i (X) \big)_{i \in\mathcal{I}}$ and note that \begin{equation*} a(X) = \sum_{j=1}^n \lambda_j \, a(v_j v_j^\top). \end{equation*} By $(i)$, for any $x \in\mathbb{R}^n$ satisfying $x^\top A_i x = a_i (X)$ for all $i \in\mathcal{I}$ we have \begin{equation*} \sum_{k=1}^K \underset{\,p_k, \, w^k}{\operatorname{Mean}} \Big[ \big( \! \operatorname{tr}[A_i X] \big)_{i \in\mathcal{I}_k}\Big] = \sum_{k=1}^K \underset{\,p_k, \, w^k}{\operatorname{Mean}} \Big[ \big( x^\top A_i x \big)_{i \in\mathcal{I}_k}\Big] \le x^\top B x. \end{equation*} Taking the infimum over such $x$ yields \begin{equation} \label{eq:ineq-to-lower-boundary} \sum_{k=1}^K \underset{\,p_k, \, w^k}{\operatorname{Mean}} \Big[ \big( \! \operatorname{tr}[A_i X] \big)_{i \in\mathcal{I}_k}\! \Big] \le\inf\left\{x^\top B x \;\middle|\; x^\top A_i x = a_i(X) \right\} \! = \! V_{A,B}\big(a(X)\big). \end{equation} Now suppose that $V_{A,B}$ is convex. Then, by Jensen's inequality, \begin{equation} \label{eq:jensen-lower-boundary} V_{A,B}\big(a(X)\big) = V_{A,B} \Big(\sum_{j=1}^n \lambda_j \, a(v_j v_j^\top) \Big) \le\sum_{j=1}^n \lambda_j V_{A,B} \big(a (v_j v_j^\top)\big). \end{equation} By definition of $V_{A,B}$, we have \begin{equation} \label{eq:vk-upper-bound} V_{A,B} \big(a (v_j v_j^\top)\big) = \inf\left\{x^\top B x \;\middle|\; x^\top A_i x = v_j^\top A_i v_j\right\} \le v_j^\top B v_j. \end{equation} Combining the above, we conclude that for every $X\in\mathcal{D}$, \begin{equation*} \begin{aligned} \sum_{k=1}^K \underset{\,p_k, \, w^k}{\operatorname{Mean}} \Big[ \big( \! \operatorname{tr}[A_i X] \big)_{i \in\mathcal{I}_k}\Big] & \le V_{A,B}\big(a(X)\big) && \text{(by \eqref{eq:ineq-to-lower-boundary})} \\ &\le\sum_{j=1}^n \lambda_j V_{A,B} \big(a(v_j v_j^\top)\big) && \text{(by \eqref{eq:jensen-lower-boundary})} \\ & \le\sum_{j=1}^n \lambda_j v_j^\top B v_j && \text{(by \eqref{eq:vk-upper-bound})} \\ & = \operatorname{tr} \big[ B \sum_{j=1}^n \lambda_j v_j v_j^\top\big] \; && \text{(by trace properties)} \\ & = \operatorname{tr} \big[ B X\big], && \text{(by \eqref{eq:density-spectral-decomp})} \end{aligned} \end{equation*} which yields $(i^*)$, as desired. \qed\par
\textit{Remark.} The equivalence in Theorem~\ref{thm:main} may be viewed as a lossless $\mathcal{S}$-procedure for grouped weighted power means. The scalar supporting-majorant formula determines the multiplier geometry, while lower-envelope convexity ensures that lifting from vectors to density matrices introduces no gap. \par
\par
\subsection{The Two-Term Peter-Paul Majorant} \label{section:Peter-Paul} For two terms, the convexity hypothesis in Theorem~\ref{thm:main} is automatic: Proposition~\ref{prop:convexity-m2} shows that the relevant lower envelope is always convex. The resulting certificate is therefore unconditional, and the two-variable multiplier set can be parametrized by a single scalar, giving the following Peter-Paul-type majorant. \par
\begin{corollary}[Sharpness of Peter-Paul Majorant] \label{cor:PP-Majorant} Let $A, B, C \in\mathbb{S}^n_{\scriptscriptstyle\succeq0}$, $A, B \ne0$. Let $w=(a,b)$ with $a, b > 0$ and $a+b=1$. Let $p \in[-\infty, 1)$ and $q = p(p-1)^{-1}$. For $p\ne0$, define \begin{equation*} I_p \vcentcolon= \begin{cases} [0,1], & p<0,\\ (0,1), & 0<p<1. \end{cases} \end{equation*} Consider the following statements: \begin{enumerate} \item[$(i)$] For all $x \in\mathbb{R}^n$, \begin{equation*} \underset{p, \, w}{\operatorname{Mean}} \big[ x^\top A x, \, x^\top B x \big] \le x^\top C x. \end{equation*} \item[$(ii)$] There exists $s > 0$ such that \begin{equation*} a s^{b} A + b s^{-a} B \preceq C. \end{equation*} \item[$(iii)$] There exists $s \in I_p$ such that \begin{equation*} a^{\frac{1}{p}} s^{\frac{1}{q}} A + b^{\frac{1}{p}} (1-s)^{\frac{1}{q}} B \preceq C. \end{equation*} \end{enumerate} Then: \begin{itemize} \item[$\bullet$] If $p = 0$, the equivalence $(i) \Leftrightarrow(ii)$ holds (geometric mean case). \item[$\bullet$] If $p\ne0$, the equivalence $(i) \Leftrightarrow(iii)$ holds (other power mean cases). \end{itemize} \end{corollary}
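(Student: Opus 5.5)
The plan is to specialize Theorem~\ref{thm:main} to a single group ($K=1$) with $\mathcal{I}=\{1,2\}$, forms $A_1=A$, $A_2=B$, weights $(a,b)$, and majorant $C$. By Proposition~\ref{prop:convexity-m2}, the lower envelope $V_{(A,B),C}$ is convex because $m=2$. Theorem~\ref{thm:main} therefore gives, unconditionally, that $(i)$ is equivalent to the existence of $\alpha\in\mathcal{A}$ with $\alpha_1 a A+\alpha_2 b B\preceq C$. What remains is purely scalar: parametrize the two-dimensional set $\mathcal{A}$ by one parameter and check that the resulting coefficients are exactly those in $(ii)$ or $(iii)$.

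\emph{Geometric case $p=0$.} Here $q=0$. Since $0\le p_k<1$, the set is $\mathcal{A}=\{\alpha\in\mathbb{R}^2_{>0}: \alpha_1^{a}\alpha_2^{b}=1\}$. Put $\alpha_1=s^{b}$ and $\alpha_2=s^{-a}$ with $s>0$; then $\alpha_1^a\alpha_2^b=s^{ab-ab}=1$. Conversely, given $\alpha\in\mathcal{A}$, set $s=\alpha_1^{1/b}$; then $\alpha_2=\alpha_1^{-a/b}=s^{-a}$. So $s\mapsto(s^b,s^{-a})$ is a bijection from $(0,\infty)$ onto $\mathcal{A}$. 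The certificate becomes $a s^b A+b s^{-a}B\preceq C$, which is $(ii)$.

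\emph{Case $p\neq0$.} Use the map $\varphi$ from the proof of Theorem~\ref{thm:main}, namely $\alpha_i=(w_i^{-1}\beta_i)^{1/q}$ with $\beta\in\mathcal{B}$. In the two-term case $\beta=(s,1-s)$, so $\alpha_1=(s/a)^{1/q}$ and $\alpha_2=((1-s)/b)^{1/q}$. Note that $\sum w_i\alpha_i^q=\beta_1+\beta_2=1$, which is exactly the level-set condition.

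The domain of $s$ comes from $\mathcal{B}$:
\begin{itemize}
\item[$\bullet$] for $p<0$ (including $p=-\infty$, $q=1$) we need $\beta\ge0$, giving $s\in[0,1]$;
\item[$\bullet$] for $0<p<1$ we need $\beta>0$, giving $s\in(0,1)$.
\end{itemize}
This is exactly $I_p$.

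The coefficients then simplify: $a\alpha_1=a\cdot a^{-1/q}s^{1/q}=a^{1-1/q}s^{1/q}$. Since $1/q=(p-1)/p=1-1/p$, we get $1-1/q=1/p$, so $a\alpha_1=a^{1/p}s^{1/q}$. Symmetrically, $b\alpha_2=b^{1/p}(1-s)^{1/q}$. This gives $(iii)$.

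The main obstacle is boundary and sign bookkeeping.
\begin{itemize}
\item[$\bullet$] For $p<0$ we have $0<q<1$, so $1/q>1$ and $s^{1/q}$ is well defined and equals $0$ at $s=0$; this matches $\alpha_1=0\in\mathcal{A}$, since $\mathcal{A}$ allows zero components when $p<0$.
\item[$\bullet$] For $0<p<1$ we have $q<0$, so negative powers appear; these stay finite precisely because $s\in(0,1)$.
\item[$\bullet$] At $p=-\infty$ the conventions $q=1$ and $a^{1/p}=1$ must be read as limits, so the condition becomes $sA+(1-s)B\preceq C$ (Yuan's lemma). This should be checked against the definition $\mathcal{A}=\{\alpha\ge0:\sum w_i\alpha_i=1\}$, under which $w_i\alpha_i=\beta_i$.
\end{itemize}
I would verify each of these items explicitly so that the map from $s\in I_p$ (respectively $s>0$) to $\alpha$ is shown to be a bijection onto $\mathcal{A}$, and not merely a parametrization of a subset of it.
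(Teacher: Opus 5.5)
Your proposal is correct and follows the paper's own argument. You specialize Theorem~\ref{thm:main} to a single two-form group, invoke Proposition~\ref{prop:convexity-m2} for convexity of the lower envelope, and reparametrize $\mathcal{A}$ by $\alpha=(s^{b},s^{-a})$ or $\alpha=\bigl((s/a)^{1/q},((1-s)/b)^{1/q}\bigr)$. Your bijectivity check and the exponent identity $1-1/q=1/p$ simply make explicit what the paper leaves implicit.
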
 \par
\textit{Proof.} This follows immediately from Theorem \ref{thm:main} applied with weights $w=(a,b)$ by reparametrizing the dual constraint set $\mathcal{A}$. If $p=0$, the constraint is $\alpha_1^a \alpha_2^b = 1$, which is parametrized by setting $\alpha_1 = s^b, \alpha_2 = s^{-a}$ for $s > 0$. If $p \ne0$, the constraint is $a \alpha_1^q + b \alpha_2^q = 1$, which is parametrized by setting $\alpha_1 = (s/a)^{1/q}$ and $\alpha_2 = ((1-s)/b)^{1/q}$ for $s \in I_p$. In both cases, substituting these into the LMI condition $a \alpha_1 A + b \alpha_2 B \preceq C$ yields the stated forms. The equivalence holds without additional assumptions because the variational lower envelope is always convex for $m=2$ (Proposition \ref{prop:convexity-m2}). \qed\par
\textit{Remark.} At $p=-\infty$, Corollary~\ref{cor:PP-Majorant} reduces to Yuan's lemma after writing $A_1=C-A$ and $A_2=C-B$: \begin{equation*} \max\{x^\top A_1 x,\, x^\top A_2 x\}\ge0 \ \ \forall x \quad\Leftrightarrow\quad\exists\, \theta\in[0,1]:\ \theta A_1+(1-\theta)A_2\succeq0. \end{equation*} At $p=0$, statement $(ii)$ gives the weighted Young family of quadratic majorants. For equal weights, it is the quadratic-form version of the classical Peter-Paul inequality $uv\leq\frac{\varepsilon}{2}u^2+\frac{1}{2\varepsilon}v^2$. This majorant result complements the operator geometric mean, which addresses the minorant side. \par
\subsection{Extrema on the Unit Sphere} \par
The certificate also yields extremal formulas on the unit sphere. \par
\begin{corollary}[Variational Characterization] \label{cor:variational} Let the notation be as in Theorem \ref{thm:main} and denote $S(x) \vcentcolon=\sum_{k=1}^K \operatorname{Mean}_{\,p_k, \, w^k} \big[ ( x^\top A_i x)_{i \in\mathcal{I}_k}\big]$. Then: \begin{enumerate} \item[$(i)$] Unconditionally, \begin{equation} \inf_{\|x\|=1} S(x) = \inf_{\alpha\in\mathcal{A}} \lambda_{\min} \Big( \sum_{i \in\mathcal{I}} \alpha_i \, w_i A_i \Big). \end{equation} \item[$(ii)$] If the lower envelope $V_{A,I}$ is convex, then \begin{equation} \sup_{\|x\|=1} S(x) = \inf_{\alpha\in\mathcal{A}} \lambda_{\max} \Big( \sum_{i \in\mathcal{I}} \alpha_i \, w_i A_i \Big). \end{equation} \end{enumerate} \end{corollary}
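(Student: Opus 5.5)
Both parts come from Theorem~\ref{thm:main} applied with $B=tI$, where $t\in\mathbb{R}$ is a free parameter, together with the homogeneity of $S$. Every power mean is positively homogeneous of degree one in its arguments, and $x^\top A_i x$ is homogeneous of degree two in $x$. Hence $S(\lambda x)=\lambda^2 S(x)$, and on the sphere $S(x)\le t$ for all $\|x\|=1$ is the same as $S(x)\le t\,x^\top x$ for all $x\in\mathbb{R}^n$.

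\textbf{Part (ii).} Fix $t\ge 0$, so that $B=tI\succeq0$ as the theorem requires. The value $t<0$ never arises, because $S\ge0$; if the supremum is $0$, a limiting argument handles that edge case. Under the hypothesis that $V_{A,I}$ is convex, I would check that $V_{A,tI}=tV_{A,I}$ is also convex. For $t>0$ this is immediate. For $t=0$, $V_{A,0}$ is the indicator function of $W(A)$, and $W(A)$ is convex by Proposition~\ref{prop:hierarchy}. Theorem~\ref{thm:main} then shows that $\sup_{\|x\|=1}S(x)\le t$ holds if and only if some $\alpha\in\mathcal{A}$ satisfies $\sum_i\alpha_iw_iA_i\preceq tI$. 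This LMI is equivalent to $\lambda_{\max}\bigl(\sum_i\alpha_iw_iA_i\bigr)\le t$. The two sides of the claimed identity therefore have the same set of upper bounds $t\ge0$, provided the infimum over $\alpha$ is attained. Attainment follows from the coercivity argument at the end of the proof of $(i^*)\Leftrightarrow(ii)$, applied with $B=tI$. So the two sides coincide. Alternatively, the chain of equivalences in that proof gives $\sup_{X\in\mathcal D}\inf_\alpha\Phi=\inf_\alpha\lambda_{\max}(\cdot)-t$ directly. Convexity of the lower envelope then makes $\sup_{X\in\mathcal{D}}$ of the left-hand side of $(i^*)$ with $B=0$ equal to $\sup_{\|x\|=1}S$, via the Jensen step \eqref{eq:jensen-lower-boundary}.

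\textbf{Part (i).} No convexity is needed here, because the inner and outer operations are both infima and can be exchanged freely. By \eqref{eq:variational-sum-wgm} with $y_i=x^\top A_ix$,
\begin{equation*}
\inf_{\|x\|=1}S(x)=\inf_{\|x\|=1}\inf_{\alpha\in\mathcal A}x^\top\Big(\sum_i\alpha_iw_iA_i\Big)x=\inf_{\alpha\in\mathcal A}\inf_{\|x\|=1}x^\top\Big(\sum_i\alpha_iw_iA_i\Big)x .
\end{equation*}
The inner infimum on the right is $\lambda_{\min}\bigl(\sum_i\alpha_iw_iA_i\bigr)$ by the Rayleigh quotient. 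This settles part (i).

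\textbf{Main obstacle.} Most of the care goes into the bookkeeping for part (ii). One must confirm that the hypothesis of Theorem~\ref{thm:main} is met for each $B=tI$, including the degenerate value $t=0$. One must also confirm that the attainment of the infimum used in the theorem does not depend on $t$. The substantive duality gap is already closed by the theorem, so nothing new is needed there.
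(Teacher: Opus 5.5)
Your proof is correct and follows the paper: part (i) is the same exchange of the two infima, and part (ii) applies Theorem~\ref{thm:main} with $B$ a multiple of the identity, as the paper does with $B=sI$. The only differences are minor. The paper gets the easy inequality $\sup_{\|x\|=1}S(x)\le\inf_{\alpha\in\mathcal{A}}\lambda_{\max}\bigl(\sum_{i\in\mathcal{I}}\alpha_iw_iA_i\bigr)$ directly from the pointwise majorant, with no appeal to attainment. Your explicit check that $V_{A,tI}=tV_{A,I}$ is convex, including the indicator case $t=0$, fills in a hypothesis the paper uses without comment.
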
 \par
\textit{Proof.} For part $(i)$, taking the joint infimum over $x$ and $\alpha$ gives \begin{equation*} \begin{aligned} \inf_{\|x\|=1} S(x) = \inf_{\|x\|=1} \inf_{\alpha\in\mathcal{A}} \sum_{i \in\mathcal{I}} \alpha_i w_i \big( x^\top A_i x \big) = \inf_{\alpha\in\mathcal{A}} \inf_{\|x\|=1} x^\top\Big( \sum_{i \in\mathcal{I}} \alpha_i w_i A_i \Big) x, \end{aligned} \end{equation*} which yields the claim. For part $(ii)$, note that, unconditionally, for all $\alpha\in\mathcal{A}$, \begin{equation*} \begin{aligned} s \vcentcolon= \sup_{\|x\|=1} S(x) \le\sup_{\|x\|=1} \sum_{i \in\mathcal{I}} \alpha_i w_i \big( x^\top A_i x \big) = \lambda_{\max} \Big( \sum_{i \in\mathcal{I}} \alpha_i w_i A_i \Big), \end{aligned} \end{equation*} therefore $s \le\inf_{\alpha\in\mathcal{A}} \lambda_{\max} \big( \sum\alpha_i w_i A_i \big)$. Conversely, apply Theorem \ref{thm:main} with $B = s I$. Then statement $(i)$ of Theorem \ref{thm:main} holds and, since $V_{A,I}$ is convex, statement $(ii)$ of that theorem follows: there exists $\alpha\in\mathcal{A}$ such that $\sum\alpha_i w_i A_i \preceq s I$. Consequently, $\inf_{\alpha\in\mathcal{A}} \lambda_{\max} \big( \sum\alpha_i w_i A_i \big) \le s$. Combining the two inequalities establishes the equality. \qed\par
\textit{Remark.} For $p=0$ and a single group, part $(ii)$ is the exact version, under lower-envelope convexity, of the SDP relaxation studied by Yuan and Parrilo~\cite{yuan2021semidefinite} for products of positive semidefinite quadratic forms on the sphere. Part $(i)$ is unconditional because minimization allows the two infima to be exchanged, whereas the maximization identity in $(ii)$ is precisely where losslessness of the semidefinite relaxation is needed. The change of variables~\eqref{eq:B-change}, \eqref{eq:beta-change} then turns the dual side into the convex eigenvalue program used below. \par
\begin{example} \label{ex:with-graphs} To illustrate the variational characterization for multiple groups, we consider an objective combining a weighted harmonic mean, a geometric mean, and a minimum. Let $A_1, \dots, A_8 \in\mathbb{S}^2_{\scriptscriptstyle\succeq0}$ be defined as \begin{equation*} A_1 = \begin{bmatrix} 8 & 2 \\ 2 & 5 \end{bmatrix}, \quad A_2 = \begin{bmatrix} 7 & 1 \\ 1 & 4 \end{bmatrix}, \quad A_3 = \begin{bmatrix} 6 & 0 \\ 0 & 3 \end{bmatrix}, \quad A_4 = \begin{bmatrix} 5 & -1 \\ -1 & 2 \end{bmatrix} \end{equation*} \begin{equation*} A_5 = \begin{bmatrix} 2 & 2 \\ 2 & 2 \end{bmatrix}, \quad A_6 = \begin{bmatrix} 4 & -2 \\ -2 & 1 \end{bmatrix}, \quad A_7 = \begin{bmatrix} 6 & 6 \\ 6 & 6 \end{bmatrix}, \quad A_8 = \begin{bmatrix} 21 & -9 \\ -9 & 6 \end{bmatrix}. \end{equation*} Subject to $\|x\|=1$, we seek to maximize the function $f : \mathbb{R}^2 \to\mathbb{R}$: \begin{equation*} \begin{aligned} f(x) = \left( \frac{1/6}{x^\top A_1 x} + \frac{1/3}{x^\top A_2 x} + \frac{1/2}{x^\top A_3 x}\right)^{-1} & + \sqrt[3]{\big( x^\top A_4 x\big)\big( x^\top A_5 x\big)\big( x^\top A_6 x\big)} \\[3pt] & + \min\big\{ x^\top A_7 x, \, x^\top A_8 x\big\}. \end{aligned} \end{equation*} \par
The matrices $A_i$ do not all commute pairwise, so simultaneous diagonalization is unavailable. Nevertheless, the forms span only a two-dimensional space. Indeed, every $A_i$ is a linear combination of $A_3$ and $A_5$. The lower envelope $V_{A,I}$ therefore coincides, up to a linear embedding, with $V_{(A_3,A_5),I}$, which is convex by Proposition~\ref{prop:convexity-m2}. Direct maximization is possible and provides a useful numerical check, while the equivalent eigenvalue minimization illustrates the formulation that remains applicable in higher-dimensional grouped problems. \par
\par
\begin{figure}[t] \centering\begin{minipage}[t]{0.48\textwidth} \centering\includegraphics[width=\linewidth]{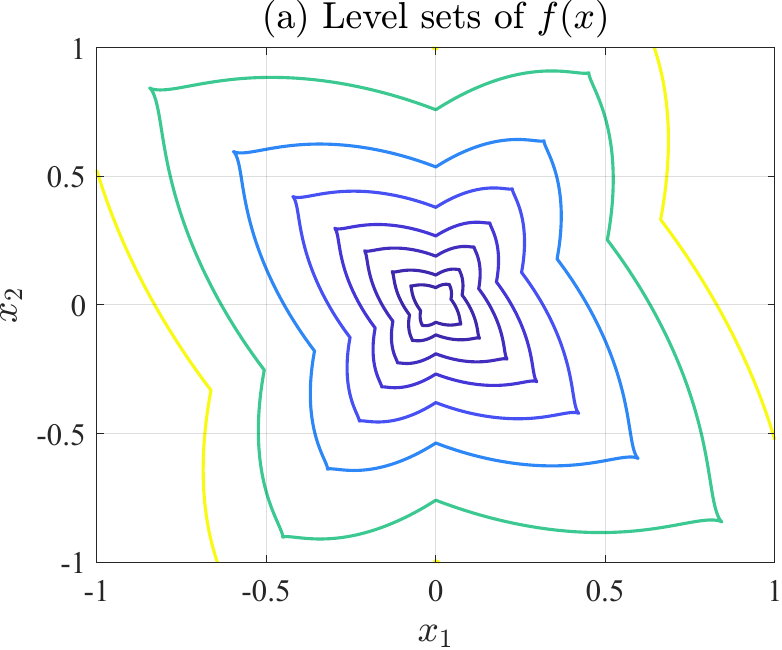} \end{minipage}\hfill\begin{minipage}[t]{0.48\textwidth} \centering\includegraphics[width=\linewidth]{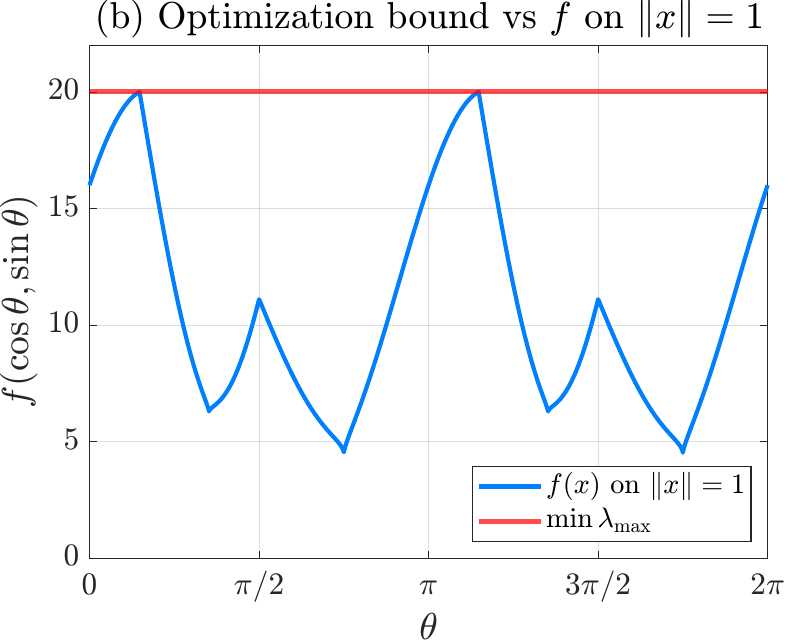} \end{minipage} \caption{Visualization of Example \ref{ex:with-graphs}. (a) Level sets of $f$. (b) The value of $f$ on the unit circle $\|x\|=1$ alongside the tight upper bound obtained from the optimization problem.} \label{fig:asymmetric-side-by-side} \end{figure} \par
By Corollary~\ref{cor:variational}, the maximum equals the optimal value of the following optimization problem: \begin{equation*} \begin{aligned} & \underset{\alpha\in\mathbb{R}^8_{\ge0}}{\text{minimize}} && \lambda_{\max} \bigg( \sum_{i=1}^8\alpha_i w_i A_i \bigg) \\ & \text{subject to} && \big( w_1 \sqrt{\alpha_1}+ w_2 \sqrt{\alpha_2}+ w_3 \sqrt{\alpha_3} \big)^2 = 1, \\[0.1cm] &&& \alpha_4^{w_4} \alpha_5^{w_5} \alpha_6^{w_6} = 1, \quad w_7 \alpha_7+ w_8 \alpha_8 = 1, \end{aligned} \end{equation*} where $w = (1/6, \,1/3, \,1/2, \, 1/3, \, 1/3, \, 1/3, \, 1/2, \, 1/2)$. In order to convexify this problem, we apply the change of variables introduced in \eqref{eq:B-change}, \eqref{eq:beta-change} to obtain the following optimization problem: \begin{equation*} \begin{aligned} & \underset{\beta\in\mathbb{R}^8}{\text{minimize}} && \lambda_{\max} \Bigg( \sum_{i=1}^3 \frac{\beta_i^2}{w_i} A_i \;+\; \sum_{i=4}^6 \frac{e^{\beta_i}}{3} A_i \;+\; \sum_{i=7}^8 \beta_i A_i \Bigg) \\ & \text{subject to} && \beta_1+\beta_2+\beta_3 = 1,\quad\beta_1,\beta_2,\beta_3 \ge0, \\ &&& \frac{\beta_4}{3}+\frac{\beta_5}{3}+\frac{\beta_6}{3} = 0, \quad\beta_7+\beta_8 = 1,\quad\beta_7,\beta_8 \ge0. \end{aligned} \end{equation*} Each scalar coefficient in the matrix-valued map is convex and each $A_i$ is positive semidefinite, so the matrix-valued objective map is convex in the Loewner order. Since $\lambda_{\max}$ is convex and monotone in that order, the objective is convex, while all constraints are linear. Thus the original non-convex maximization is represented exactly by a convex eigenvalue program. \par
Parametrizing the unit circle as $x=(\cos\theta,\sin\theta)$ and solving the transformed convex program give the same optimal value: $\max_{\|x\|=1} f(x) \approx20.0103$. In Figure~\ref{fig:asymmetric-side-by-side}, the left panel displays the geometry of the nonquadratic objective, while the right panel shows contact between $f$ and the optimal quadratic upper bound at the maximizing directions. \end{example} \par
\section{An Application to Optimal Control: Coincident Excitation} \label{sec:control} \par
Many systems possess a trigger that responds not to any single observable but to the product of several. Because the response is multiplicative, it becomes appreciable only in the rare event that several quantities peak at the same instant, and a shortfall in one of them cannot be bought back by a surplus in another. The pattern recurs across domains. In photochemistry, the rate of a multiphoton process is proportional to the product of the illuminating intensities, each intensity itself a quadratic function of the field~\cite{denk1990,lakowicz1996,helmchen2005}. In cellular signaling, a coincidence detector such as a molecular gate is engaged only when several pathways are driven simultaneously, its activation growing with the product of the corresponding drives~\cite{seeburg1995,philip2010}. In structural dynamics, the energy exchanged across a weak nonlinear coupling between vibrational modes (as in an internal resonance) grows with a product of the energies of the participating modes, so that initiating the exchange requires all of them to be excited together~\cite{kerschen2009,asadi2021}. In every instance the driving quantities are quadratic measures of the state (energies, powers, or intensities), their product is the natural figure of merit for the trigger, and the actuation that we are free to design carries a limited budget of energy, whether a fuel reserve or a safety limit that guards against damage to tissue or hardware. This provides a potential application of the preceding analysis to control theory through the following formalization. \par
\smallskip\par
\textit{Problem formulation.} Consider a controllable linear system \begin{equation} \label{eq:lti} \dot{x}(t) = A x(t) + B u(t), \quad x(0) = 0, \quad x(t) \in\mathbb{R}^n, \quad u(t) \in\mathbb{R}^r, \end{equation} started from rest and actuated by an input whose energy is limited by \begin{equation} \label{eq:energy} \int_0^T u(t)^\top R \, u(t) \, \mathrm{d}t \le1, \end{equation} where $R \in\mathbb{S}^r_{\scriptscriptstyle\succ0}$ weights the channels of $u$ according to their cost. Given matrices $Q_1, \dots, Q_N \in\mathbb{S}^n_{\scriptscriptstyle\succeq0}$, $Q_i \ne0$, together with weights $w_i > 0$ obeying $\sum_{i=1}^N w_i = 1$, we seek the input that maximizes the peak value of the corresponding weighted geometric mean (equivalently, the $p=0$ weighted power mean studied above) of these quadratic quantities over the time interval $[0,T]$: \begin{equation} \label{eq:control-objective} J^\star\vcentcolon= \sup_{u} \; \sup_{0 \le t \le T} \; \prod_{i=1}^N \big( x(t)^\top Q_i x(t) \big)^{w_i}, \end{equation} the outer supremum ranging over all inputs satisfying \eqref{eq:energy}. (Positive exponents that do not sum to one are brought to this normalization by setting $w_i = p_i / \sum_j p_j$ and raising the value to the power $\sum_j p_j$, an increasing transformation that leaves both the optimal input and the analysis unchanged.) Since the weights sum to one, the objective is homogeneous of degree two in the state, matching the scale fixed by the budget \eqref{eq:energy}. \par
\smallskip\par
\textit{Solution.} Although problem \eqref{eq:control-objective} is nonconvex and of infinite dimension, it reduces to one of finite dimension. Let \begin{equation*} P_T = \int_0^T e^{A \tau} B R^{-1} B^\top e^{A^\top\tau} \, \mathrm{d}\tau\end{equation*} be the finite-horizon controllability Gramian of \eqref{eq:lti} under the energy weighting $R$. For each $t>0$, the least energy required to steer the system from rest to a state $x_f$ at time $t$ is $x_f^\top P_t^{-1}x_f$; see, for example,~\cite{boyd1994linear}. Hence the states reachable at time $t$ under the unit budget form the ellipsoid defined by $x_f^\top P_t^{-1}x_f\leq1$. Since $P_t\preceq P_T$ for $0\leq t\leq T$, every such ellipsoid is contained in the terminal ellipsoid. Conversely, every point of the terminal ellipsoid is reachable at time $T$ with energy at most one. Therefore, the union of states reachable over $0\leq t\leq T$ is exactly the terminal controllability ellipsoid. \par
Using homogeneity of degree two to place the maximum on the boundary, write $x_f=P_T^{1/2}z$ and define $\widetilde Q_i\vcentcolon=P_T^{1/2}Q_iP_T^{1/2}$. The problem then becomes \begin{equation} \label{eq:Jstar-ellipsoid} J^\star= \sup_{\|z\|=1} \; \prod_{i=1}^N \big( z^\top\widetilde Q_i z \big)^{w_i}. \end{equation} Define $\mathcal{A}=\{\alpha\in\mathbb{R}^N_{\scriptscriptstyle>0}\,|\,\prod_{i=1}^N\alpha_i^{w_i}=1\}$ and $\mathcal{B}=\{\beta\in\mathbb{R}^N\,|\,\sum_{i=1}^Nw_i\beta_i=0\}$. If the lower envelope $V_{\widetilde Q,I}$ is convex, Corollary~\ref{cor:variational} gives the following exact eigenvalue formulation: \begin{equation} \label{eq:Jstar-dual} J^\star= \inf_{\alpha\in\mathcal{A}} \lambda_{\max}\Bigl(\sum_{i=1}^N\alpha_iw_i\widetilde Q_i\Bigr) = \inf_{\beta\in\mathcal{B}} \lambda_{\max}\Bigl(\sum_{i=1}^Nw_ie^{\beta_i}\widetilde Q_i\Bigr). \end{equation} In the absence of a structural condition ensuring exactness, maximization of a product of positive semidefinite quadratic forms is NP-hard~\cite{yuan2021semidefinite}. \par
For $N=2$, Proposition~\ref{prop:convexity-m2} makes $V_{\widetilde Q,I}$ convex automatically, so \eqref{eq:Jstar-dual} is exact without any additional assumption. This applies, for example, to the above-mentioned two-color two-photon excitation, where two quadratic intensity factors must peak together. For $N=1$, $Q_1=I$, one has $\widetilde Q_1=P_T$, the admissible set reduces to $\alpha_1=1$, and \eqref{eq:Jstar-dual} gives $J^\star=\lambda_{\max}(P_T)$, the largest squared state norm reachable under the unit-energy constraint, as expected. \par
\par
\par
The reduction is also constructive. Let $\alpha^\star$ solve the first problem in \eqref{eq:Jstar-dual}, and set $M^\star=\sum_i\alpha_i^\star w_i\widetilde Q_i$. If the largest eigenvalue of $M^\star$ is simple, its unit eigenvector $z^\star$ maximizes \eqref{eq:Jstar-ellipsoid}. If the largest eigenvalue is multiple, one selects within the top eigenspace a unit vector satisfying $\alpha_i^\star z^{\star\top}\widetilde Q_i z^\star=J^\star$ for every $i$; exactness guarantees that such a choice exists, although it need not be unique. The corresponding terminal state is $x_f^\star=P_T^{1/2}z^\star$. \par
The input of least energy that steers the state from rest to $x_f^\star$ at time $T$ is \begin{equation} \label{eq:optimal-input} u^\star(\tau) = R^{-1} B^\top e^{A^\top(T - \tau)} P_T^{-1} x_f^\star, \qquad0 \le\tau\le T, \end{equation} and it has energy $x_f^{\star\top} P_T^{-1} x_f^\star= 1$. It therefore satisfies the budget and delivers the objective value $J^\star$ at time $T$. \par
\smallskip\par
The application therefore has three distinct layers. The reduction to the terminal controllability ellipsoid is unconditional; equality with the convex eigenvalue program requires lower-envelope convexity; and recovery from a dual solution is immediate when the top eigenvalue is simple, while in the multiple-eigenvalue case one selects an appropriate vector from the top eigenspace. The power-mean certificate separates these steps and turns the original infinite-dimensional control problem into a finite-dimensional exact formulation under the stated convexity condition. \par
\par
\par
\section{Conclusions} \par
This paper studied the sharpness of scalar variational majorants of weighted power means when the scalar arguments are replaced by positive semidefinite quadratic forms. The central condition is convexity of the lower envelope of the joint numerical range: weaker than convexity of the augmented range, yet strong enough to render the semidefinite lifting exact, so that a pointwise inequality involving sums of power means becomes equivalent to a single linear matrix inequality in the dual power-mean multipliers. For two terms, rank-one exactness makes the condition automatic, and the Peter-Paul majorant is sharp unconditionally. \par
When the condition holds, the variational corollary turns nonconvex maximization on the sphere into convex eigenvalue minimization. The optimal-control application uses this constructively, reducing a coincident-excitation problem under an energy budget to a finite-dimensional convex program whose solution yields a minimum-energy optimal input. \par
Several questions remain open. Most importantly, can lower-envelope convexity be characterized by tractable conditions beyond the mechanisms of Section~\ref{sec:lower-envelope}? Is it genuinely necessary for exactness, or can the certificate remain lossless under weaker geometric assumptions? And when exactness fails, can the relaxation gap be bounded in terms of a quantitative measure of nonconvexity of the lower envelope? Further directions include indefinite forms, constrained domains, and other scalar variational identities whose exactness may be governed by related hidden-convexity mechanisms. \par
\section*{Declarations} \par
No funding was received for conducting this study. The author has no relevant financial or non-financial interests to disclose. No external datasets were used or generated in this study. \par
 \par
\par
\end{document}